\def\a{\alpha}
\def\la{\langle}
\def\ra{\rangle}
\renewcommand{\phi}{\varphi}
\newcommand{\Z}{\mathbb{Z}}
\newtheorem{theorem}{Theorem}
\newtheorem{cor}{Corollary}[theorem]
\theoremstyle{definition}
\newtheorem{definition}{Definition}
\newtheorem{example}[theorem]{Example}
\theoremstyle{remark}
\newtheorem*{remark}{Remark}
\numberwithin{equation} {section}
\begin{document}

\title{On the Symmetric Difference Property in Difference Sets under Product Construction}

\author{Andrew Clickard}
\address{Department of Mathematical and Digital Sciences, Bloomsburg University, Bloomsburg, Pennsylvania 17815}

\email{ac24869@huskies.bloomu.edu}
\thanks{Thank you to Drs. James Davis, John Polhill, and Ken Smith for their counsel and teaching.}

\date{\today.}

\keywords{symmetric design, design, difference sets, product construction, isomorphic designs, semi-direct product, symmetric difference property, block design, coding theory, algebraic coding theory}

\begin{abstract}
A $(v, k, \lambda)$ symmetric design is said to have the symmetric difference property (SDP) if the symmetric difference of any three blocks is either a block or the complement of a block. Symmetric designs fulfilling this property have the nice property of having minimal rank, which makes them interesting to study. Thus, SDP designs become useful in coding theory applications. We show in this paper that difference sets formed by direct product construction of difference sets whose developments have the SDP also have the SDP. We also establish a few results regarding isomorphisms in product constructed SDP designs.
\end{abstract}

\maketitle




\section{Introduction}\label{Introduction}
Extremal error correcting codes, (codes whose parameters meet a bound)  have long been studied in the coding theory community, as codes that optimize the minimal distance between codes allows for more errors to be corrected. One way to generate these codes is to consider symmetric designs and their incidence matrices. From this effort, the concept of the symmetric difference property (SDP) was established; a property that minimizes the rank of the incidence matrix. The properties and interactions that SDP designs hold are often obfuscated by having to interact with extremely large matrices, and so there is a great deal that is simply not known. This paper seeks to extend the knowledge of designs with this property, particularly SDP designs coming from difference sets in groups of order $2^{2n}$.
The main results of this paper are as follows:
\begin{enumerate}
    \item If $D_1,D_2$ are difference sets in groups $G_1,G_2$, then $D = (D_1 \times (G_2-D_2)) \cup ((G_1-D_1) \times D_2)$ has the SDP if and only if $D_1$ and $D_2$ have the SDP. (Section \ref{section:direct-product}, Theorem \ref{theorem:direct-product})
    \item Given symplectic difference sets $D_1$ and $D_2$ in groups $G_1$ and $G_2$ and a homomorphism $\phi:G_2 \to Aut(G_2)$, the product construction $D$ of $D_1$ and $D_2$ is a symplectic difference set in $G_1 \rtimes_\phi G_2$ if the automorphism induced by each generator of $G_2$ under $\phi$ fixes $D_1$. (Section \ref{section:semi-direct}, Theorem \ref{theorem:semi-direct condition})
    \item If $D,D'$ are the product construction of SDP difference sets $D_1,D_2$ and $D_1',D_2'$ in $G_1 \times G_2$ and $G_1' \times G_2'$ with $|G_1| = |G_1'|$, $|G_2| = |G_2'|$, then the developments of $D$ and $D'$ are isomorphic if the developments of $D_1$ and $D_1'$ are isomorphic and the developments of $D_2$ and $D_2'$ are isomorphic. (Section \ref{section:isomorphisms}, Theorem \ref{theorem:direct-iso})
\end{enumerate}
The symmetric difference property being closed under direct product construction allows for a whole new line of questioning: In what groups is it possible to produce SDP difference sets from product construction? And more generally, we can ask, given two difference sets with the SDP, what effects do different semi-direct products have on the product construction of the sets?


\section{Preliminaries}\label{section:Notation}
Let us first lay out some notation and definitions to be used going forward.

\begin{definition}
Let $A,B$ be sets. Then the operation $\Delta$ is defined by $A \Delta B = (A-B)\cup(B-A)$ and is called the \textit{symmetric difference} of $A$ and $B$.
\end{definition}
The sets we want to take the symmetric differences of are the blocks of a symmetric design, which is defined thusly:
\begin{definition}
A $(v,k,\lambda)$ symmetric design is a set of points $P$ together with a set of blocks $B$ such that there are $v$ points and $v$ blocks such that any point is incident on $k$ blocks, any block is incident on $k$ points, any two points share incidence on $\lambda$ blocks, and any two blocks share incidence on $\lambda$ points.
\end{definition}
Importantly for this paper, we may organize a symmetric design into what is know as an incidence matrix, which is defined here:
\begin{definition}
The incidence matrix of a $(v,k,\lambda)$ symmetric design with point set $P$ and block set $B$ is the $v \times v$ matrix with columns labelled by the elements of $P$ and rows labelled by the elements of $B$, with the $ij$-th entry being 1 if the $j$-th point is incident on the $i$-th block, and 0 otherwise.
\end{definition}
Now, we are most interested in symmetric designs that come from groups, and in particular, the subsets of groups known as difference sets:
\begin{definition}
A $(v,k,\lambda)$ difference set is a set $D$ of order $k$ in a group $G$ of order $v$ such that the multiset $\{d_1d_2^{-1} \ | \ d_1,d_2 \in D\}$ contains every non-identity element of the group $\lambda$ times. The \textit{development} of a difference set $D$ is the symmetric design with the elements of $G$ as points, the left translates of $D$ by the elements of $G$ as blocks, and the incidence relation defined by set inclusion.
\end{definition}
And now, we may finally define the symmetric difference property in symmetric difference sets.

\begin{definition}
A $(v,k,\lambda)$ symmetric design has the \textit{symmetric difference property} (SDP) if the symmetric difference of any three blocks is either a block or the complement of a block. For further motivation and reading regarding this property, see \cite{Kantor}.
\end{definition}

Note that if we consider the incidence matrix of a symmetric design, having the SDP is equivalent to saying that the addition of three rows (addition here defined as element-wise addition modulo 2) of the matrix is either a row or the complement of a row. This is the definition to be used throughout the remainder of the paper. The complement of the incidence matrix $A$ is defined to be $A^c = A + J$, where $J$ is the all-ones matrix of the same size as $A$. 

\begin{definition}
If $D_1=(P_1,B_1), \ D_2=(P_2,B_2)$ are two $(v,k,\lambda)$ symmetric designs, they are isomorphic if and only if there exists a bijection $\phi:P_1 \to P_2$ that preserves incidence. Equivalently, let $A_1,A_2$ be the incidence matrices for $D_1,D_2$, respectively. Then $D_1$ and $D_2$ are isomorphic if and only if there exist permutation matrices $P,Q$ such that $A_1 = PA_2Q$.
\end{definition}

One of the earliest examples of an error-correcting code is the Reed-Muller code, which was first introduced in 1954 by the titular mathematicians Reed and Muller in \cite{Reed} and \cite{Muller}. Our naming scheme for the basis elements of the first-order Reed-Muller code $RM(1,m)$ is as follows. The first $m$ basis elements have $2^{m-1}$ zeros and $2^{m-1}$ ones. The first basis element $c_1$ has the full string of zeros followed by the full string of ones. $c_2$ has a string of $2^{m-2}$ zeros, $2^{m-2}$ ones, and repeats that pattern again. In general, the $i$-th basis element has $2^{i}$ alternating strings of $2^{m-i}$ zeros and ones. The $(m+1)$-th basis word, $\mathds{1}$ is the all-ones word, and is denoted as such. As an illustrative example, the basis words for $RM(1,4)$ are:
\[
\begin{matrix}
c_1: & 0000 & 0000 & 1111 & 1111\\
c_2: & 0000 & 1111 & 0000 & 1111\\
c_3: & 0011 & 0011 & 0011 & 0011\\
c_4: & 0101 & 0101 & 0101 & 0101\\
\mathds{1}: & 1111 & 1111 & 1111 & 1111
\end{matrix}
\]
The Reed-Muller codes may also be defined recursively by $RM(1,m)=\{\la u \ \ |\ v \ra \ | \ u,v \in RM(1,m)\}$, where $\la u \ |\ v \ra$ denotes the concatenation of strings, but the basis element definition becomes a useful construction in the proof of Theorem \ref{theorem:direct-product}.

Section \ref{section:semi-direct} deals with semi-direct products of difference sets with developments that are isomorphic to the symplectic design on $2^{2n}$ points. The symplectic design on $2^{2n}$ points, first developed by Kantor in \cite{Kantor}, is the design formed by iterative product construction of the trivial SDP in $C_2^2$, and thus has incidence matrix $A$ defined by
\[A = \frac{-1}{2}\bigg(\big(\underbrace{(J_4-2I_4) \otimes (J_4-2I_4) \otimes \cdots \otimes (J_4-2I_4)}_{n \text{ times}}\big)-J_4\bigg),\]
where $\otimes$ denotes the Kronecker product, $I_4$ is the $4 \times 4$ identity matrix and $J_4$ is the $4 \times 4$ all-ones matrix. The definition for the semi-direct product of groups is as follows:
\begin{definition}
Let $N$, $H$ be groups, $Aut(N)$ be the automorphism group of $N$, and $\phi : H \to Aut(N)$ a homomorphism. Then the semi-direct product $N \rtimes_\phi H$ of $N$ and $H$ by $\phi$ is the group which has underlying set $N \times H$ and operation defined by $(n_1,h_1)(n_2,h_2) = (n_1\phi(h_1)(n_2), h_1h_2)$.
\end{definition}

The cyclic group of order $n$ is denoted by $C_n$, and is written multiplicatively. The notation $C_n^m$ denotes the direct product of $m$ copies of $C_n$.

The last piece of background we need to address upon which this paper is based is the product construction of difference sets. First, we must discuss doing calculations in the group ring $\Z[G]$. There are two main ways of representing difference sets in this group ring, each of which has its own set of strengths and weaknesses. The one we use for illustration purposes with a difference set $D$ in a group $G$ is defined by $D = \sum_{g \in G}(-1)^jg$, where $j = 0$ if $g \not\in D$ and $j=1$ otherwise. We also define $D^{(-1)} = \sum_{g \in G}(-1)^jg^{-1}$, where $j = 0$ if $g \not\in D$ and $j=1$ otherwise (hence, the element-wise inverses). From \cite{Dillon}, we have that if $D$ is a difference set in $G$, where $G$ is a 2-group, then $DD^{(-1)} = |G|$. Now, if $D_1,D_2$ are difference sets in the groups $G_1,G_2$, then the set $D = \big( D_1 \times (G_2-D_2) \big) \cup \big((G_1-D_1) \times D_2 \big)$ is a difference set in the group $G_1 \times G_2$. In the group ring $\Z[G_1 \times G_2]$, we let $D = D_1D_2$, where $D_1$ and $D_2$ are the sums as above over $G_1$ and $G_2$, respectively. We then see that $DD^{(-1)} = D_1D_2D_1^{(-1)}D_2^{(-1)} = |G_1||G_2| = |G_1 \times G_2|$, and thus $D$ is a difference set in $G_1 \times G_2$ which, written in standard set theory notation, is $\big( D_1 \times (G_2-D_2) \big) \cup \big((G_1-D_1) \times D_2 \big)$. Since the underlying set is $G_1 \times G_2$ for any semi-direct product, this construction works for any homomorphism $\phi:G_2 \to Aut(G_1)$.


\section{Closure of the SDP under Direct Product Construction}\label{section:direct-product}

We first begin with the closure of the SDP under direct product, followed by a discussion of the effects of group ``factoring" on the resulting product construction. A very specific case of the following theorem is found in \cite{Davis}, which showed that the SDP is closed when product constructed with the trivial difference set in $C_4$.

\begin{theorem}\label{theorem:direct-product}
Let $G = G_1 \times G_2$ be a group of order $2^{2n}$ with $|G_1|=2^{v_1}$, $|G_2|=2^{v_2}$, with $v_1$, $v_2$ even. Let $D_1$ and $D_2$ be difference sets in $G_1,G_2$, respectively. Then $D = \big(D_1 \times (G_2 - D_2)\big) \cup \big((G_1-D_1) \times D_2\big)$ has the SDP if and only if $D_1$ and $D_2$ have the SDP.
\end{theorem}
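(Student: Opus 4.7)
The plan is to express the incidence matrix $A$ of the development of $D$ in terms of the incidence matrices $A_1,A_2$ of the developments of $D_1,D_2$, and then reduce SDP for $D$ to SDP for each $D_i$ using this structure. Indexing points of the product development by pairs $(h_1,h_2)\in G_1\times G_2$ and blocks by translates $(g_1,g_2)D$, an unpacking of the set-theoretic definition of $D$ shows that $(h_1,h_2)\in(g_1,g_2)D$ if and only if \emph{exactly one} of $h_1\in g_1D_1$ and $h_2\in g_2D_2$ holds. Working modulo $2$, this gives
\[
A\bigl[(g_1,g_2),(h_1,h_2)\bigr]\equiv A_1[g_1,h_1]+A_2[g_2,h_2]\pmod 2,
\]
or equivalently $A\equiv A_1\otimes J+J\otimes A_2\pmod 2$, where $J$ is the all-ones matrix of the appropriate size. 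Writing $\alpha_g$ and $\beta_g$ for the rows of $A_1$ and $A_2$ indexed by $g$, the row of $A$ indexed by $(g_1,g_2)$ has the factored form $\alpha_{g_1}\otimes\mathbf 1+\mathbf 1\otimes\beta_{g_2}$, and the all-ones row is $\mathbf 1\otimes\mathbf 1$. This is the observation that connects the paper's Reed--Muller basis description to the product construction: the two families $\{\alpha\otimes\mathbf 1\}$ and $\{\mathbf 1\otimes\beta\}$ play the roles of ``horizontal'' and ``vertical'' coordinate functions.

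For the forward direction, I would take any three rows of $A$ and use $\mathbb F_2$-bilinearity of $\otimes$ to rewrite the sum as
\[
\Bigl(\textstyle\sum_{i=1}^3\alpha_{g_1^{(i)}}\Bigr)\otimes\mathbf 1+\mathbf 1\otimes\Bigl(\textstyle\sum_{i=1}^3\beta_{g_2^{(i)}}\Bigr).
\]
SDP of $D_1$ lets us write the first inner sum as $\alpha_{g_1'}+\varepsilon_1\mathbf 1$ for some $g_1'$ and $\varepsilon_1\in\{0,1\}$, and analogously for $D_2$. Substituting gives the row of $A$ indexed by $(g_1',g_2')$ plus $(\varepsilon_1+\varepsilon_2)\,\mathbf 1\otimes\mathbf 1$, which is either that row itself or its complement. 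This handles all four cases of the case split uniformly.

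For the converse, given three rows $\alpha_1,\alpha_2,\alpha_3$ of $A_1$, I would pair each with a common $g_2\in G_2$ to get three rows of $A$ whose sum is $(\alpha_1+\alpha_2+\alpha_3)\otimes\mathbf 1+\mathbf 1\otimes\beta_{g_2}$. SDP of $D$ forces this to equal $\alpha'\otimes\mathbf 1+\mathbf 1\otimes\beta'+\varepsilon\,\mathbf 1\otimes\mathbf 1$ for some $\alpha',\beta'$ and $\varepsilon\in\{0,1\}$. Rearranging and evaluating at the coordinate $(h_1,h_2)$ yields an identity whose left side depends only on $h_1$ and whose right side depends only on $h_2$, so each side is constant. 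That constant is either $0$ or $1$, which forces $\alpha_1+\alpha_2+\alpha_3$ to equal $\alpha'$ or $\alpha'+\mathbf 1$, giving SDP for $D_1$. SDP for $D_2$ follows by the symmetric argument with the roles of the two factors exchanged.

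The main obstacle is the converse, specifically the ``separation of variables'' step that extracts SDP of a factor from SDP of the product. The Kronecker decomposition of $A$ is crucial here: the subspaces spanned by $\{\alpha\otimes\mathbf 1\}$ and $\{\mathbf 1\otimes\beta\}$ intersect only in the span of $\mathbf 1\otimes\mathbf 1$, which is precisely what lets one isolate the $A_1$-piece of the SDP equation up to adding the all-ones vector. A minor bookkeeping point is that the SDP is formulated for arbitrary triples (possibly with repeats), so the cases where the chosen $\alpha_i$ are not distinct reduce trivially and one only needs the argument above for triples of distinct rows; since $|G_i|$ is a power of $2$ with $v_i\ge 2$, enough distinct rows are always available to carry out the lifting.
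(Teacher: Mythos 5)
Your proposal is correct, and it rests on the same structural fact the paper uses --- with the product ordering of $G_1\times G_2$, the incidence matrix decomposes as a mod-$2$ sum of a matrix that is constant in the $G_2$-coordinate (copies of $A_1$) and one that is constant in the $G_1$-coordinate ($A_2$ blown up by all-ones blocks); the paper writes this as $A=A'+A_2'$, which is exactly your $A\equiv A_1\otimes J+J\otimes A_2$. Where you diverge is in how you exploit it. For the forward direction the paper splits into cases (three equal block rows, two equal, all distinct) and argues about concatenations of rows $\rho_v$ and their complements, leaning on the Reed--Muller/bent-function description of the rows; your bilinearity computation $\sum_i(\alpha_i\otimes\mathbf 1+\mathbf 1\otimes\beta_i)=(\sum_i\alpha_i)\otimes\mathbf 1+\mathbf 1\otimes(\sum_i\beta_i)$ collapses all of these cases into one line and needs nothing beyond the definition of SDP in each factor. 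The difference is more substantial in the converse: the paper argues by contradiction via a sub-matrix $M$ built from cosets of the Reed--Muller code and an appeal to bent functions, and that argument is rather compressed; your separation-of-variables step --- that an identity whose two sides depend on disjoint coordinates forces both sides to be constant, so the $A_1$-component of the SDP equation is determined up to adding $\mathbf 1$ --- is a complete and self-contained replacement for it. The one bookkeeping point worth making explicit when writing this up is the one you already flag: triples with repeated rows are handled trivially on both sides of the equivalence, so the lifting argument only needs to be run for distinct rows.
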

\begin{proof}
That $D$ is a difference set follows immediately from the product construction of difference sets, so all we must just verify is that $D$ has the SDP. Let $A_1$ and $A_2$ be the incidence matrices of the developments of $D_1$ and $D_2$, and keep the ordering of $G_1$ as used in $A_1$ and likewise for $G_2$ consistent throughout this proof. Let $G$ have the ordering given by $G_1\times\{g_1\}, G_1\times\{g_2\}, \cdots, G_1\times\{g_{|G_2|}\}$, where $g_i$ is the $i$-th element of $G_2$ in its ordering and $G_1$ is internally ordered as in $A_1$, and let $A$ be the development of $D$ with this ordering of $G$.

From here, consider the sub-matrices formed by the rows labelled by $G_1 \times \{g_i\}$ and the columns labelled by $G_1 \times \{g_j\}$. Firstly, note that we are now considering $A$ as a $2^{v_2} \times 2^{v_2}$ block matrix with $2^{v_1}\times2^{v_1}$ matrices as entries. Note that if $g_j \in g_iD_2$, then the sub-matrix will be equal to $A_1^c$, since we are forced to consider incidence in $(G_1-D_1) \times D_2$. But if $g_j \not\in g_iD_2$, then the sub-matrix is identical to $A_1$ by the same reasoning. Let $A'$ denote the $2^{v_2} \times 2^{v_2}$ block matrix with $A_1$ in every entry, and $A_2'$ denote the $2^{v_2} \times 2^{v_2}$ block matrix whose $ij$-th matrix is the all-ones matrix $J$ if the $ij$-th entry of $A_2$ is 1, and the zero matrix otherwise. Then we see that $A = A' + A_2'$, and thus $A$ has a similar structure to $A_2$.

Since $D_2$ has the SDP, this implies that the addition of any three of these block rows is either a block row or the complement of a block row. Let $R_i,R_j,R_k$ be three arbitrary block rows of $A$. Then $R_i + R_j + R_k \in \{R_u,R_u^c\}$ for some $u$. Let $r_i,r_j,r_k$ be rows contained in $R_i, R_j, R_k$, respectively. If $R_i=R_j=R_k$, then since $D_1$ has the SDP, and $r_i, r_j, r_k$ are the concatenation of $2^{v_2}$ copies of rows in $A_1$, then $r_i + r_j + r_k \in \{r_v, r_v^c\}$ for some $r_v \in R_i$. If, without loss of generality, $R_i=R_j$, then $R_i + R_j + R_k = R_k$. Since, as was discussed in Section \ref{section:Notation}, each row of $A_1$ defines a bent function in $RM(1,v_1)$, the addition $r_i + r_j$ is in $RM(1,2n)$. Therefore $r_i + r_j + r_k \in \{r_v,r_v^c\}$ for some $r_v \in R_k$. The other equalities follow by symmetry. If all three block rows are pairwise unequal, we consider the following. Let $r_i,r_j,r_k$ be arbitrary rows of the block rows $R_i,R_j,R_k$, respectively. Note that $r_i,r_j,r_k$ are each concatenations of a combination of rows $\rho_i,\rho_j,\rho_k$ of $A_1$ and their complements based on the $i$-th, $j$-th, and $k$-th row of $A_2$ (denote these rows $\a_i,\a_j,\a_k$, and the large versions of these rows $a_i,a_j,a_k$), respectively. Since $D_1$ has the SDP, this implies that the addition of each of these concatenated rows will result in the concatenation of some combination of a row $\rho_v$ and its complement. Then 
\[r_i + r_j + r_k = 
\begin{matrix}
&\la& \rho_i & | & \rho_i & | & \cdots & | & \rho_i &\ra + a_i \\
&\la& \rho_j & | & \rho_j & | & \cdots & | & \rho_j &\ra + a_j  \\
+&\la& \rho_k & | & \rho_k & | & \cdots & | & \rho_k &\ra + a_k  \\
\end{matrix}\]
Which is one of \[\left\{\la \rho_v \ | \ \rho_v \ | \ \cdots \ | \ \rho_v \ra,\la  \rho_v^c \ |\  \rho_v^c \ | \ \cdots \ | \ \rho_v^c \ra  \right\} + \left\{ a_v,a_v^c\right\},\]
all of which are a row of $A$ or the complement thereof. Thus, the addition of any three rows of $A$ is either a row of $A$ or the complement of a row, and so $D$ has the SDP.

Conversely, without loss of generality suppose that $D_1$ does not have the SDP. By way of contradiction, suppose $D$ has the SDP. Consider the sub-matrix $M$ formed by the rows of the form $r_1 + \{RM(1,2n)-\langle c_1,\cdots,c_{v_2}\rangle\}$ and the columns labelled by $G_1 \times \{1\}$. Firstly, note that $M$ is a $2^{v_1} \times 2^{v_1}$ square matrix. By construction, the first row $\rho_1$ of $M$ must be the incidence of either $D_1$ or $G_1-D_1$. Also, since $D$ has the SDP, and $M$ is formed by a subgroup of the code, the addition of any three rows of $M$ must also be a row or the complement of a row, which implies that the incidence of $D_1$ or $G_1-D_1$ is a bent function on $RM(1,v_1)$. Thus, the development of $D_1$ must have the SDP contrary to supposition, and therefore $D$ cannot have the SDP.
\end{proof}

Note that this implies that the possible product-constructed SDP difference sets in $G$ depends on the ``factoring" of $G$. For example, in the Abelian group $C_8 \times C_8 \times C_2 \times C_2$, there are no SDP difference sets coming from the grouping $(C_8^2) \times (C_2^2)$, but there is coming from $(C_8 \times C_2)^2$. From this, we can state the following corollaries:

\begin{remark}
The Abelian group $C_8 \times C_8 \times C_4$ has no SDP difference sets from product construction.
\end{remark}
\begin{proof}
The only eligible grouping of $C_8 \times C_8 \times C_4$ into difference set-containing groups is $(C_8)^2 \times C_4$, so since $C_8^2$ has no SDP difference sets, $C_8 \times C_8 \times C_4$ cannot have any SDP difference sets by product construction by Theorem \ref{theorem:direct-product}.
\end{proof}

\begin{remark}\label{corollary:c16}
The abelian groups $C_{16} \times (C_8 \times C_2)$, $C_{16} \times C_4^2$,  $C_{16} \times (C_8 \times C_2) \times (C_4) \times (C_2^2)$, when grouped as such, have no SDP difference sets from product construction for $i,j,k \geq 0$.
\end{remark}
\begin{proof}
None of $C_{16} \times (C_8 \times C_2)$, $C_{16} \times C_4$, nor $C_{16} \times (C_2^2)$ contain SDP difference sets from product construction by Theorem \ref{theorem:direct-product}, from which it immediately follows that any grouping of these groups has no SDP difference set from product construction.
\end{proof}

\begin{remark}
This grouping as in the last remark is some sense the ``best" factoring of the group. That is, it is grouped so as to maximize the number of SDP-containing groups and placing priority on larger ``primitive" (not product constructed) SDP difference set-containing groups. Another example of this factoring is the factoring of the group $C_8 \times C_8 \times C_2 \times C_2$ as we discussed earlier.
\end{remark}

With this result under our belts, we may now expand to the more general semi-direct product.

\section{Closure of the SDP under Semi-Direct Product Construction}\label{section:semi-direct}

In general, the SDP will not be closed under product construction. Section \ref{section:direct-product} is dedicated to showing a very specific case of the general product construction. The question then becomes: under what semi-direct products does the product construction of two SDP difference sets yield the SDP? Note that the SDP is not always preserved under the general product, as, for example, in the groups of the form $(C_8 \times C_2) \rtimes (C_8 \times C_2)$, there are seven distinct (nonisomorphic) designs. Of these, only two have the SDP. For the purpose of illustration, the following table shows the mappings of $x$ and $y$ that produces each design (and so the homomorphism will be defined by a choice of mapping for $x$ and a choice of mapping for $y$). Note that for designs 3 and 4, the combinations of the mappings of $x$ and $y$ are restricted. These restrictions for Design 3 will be discussed following the table.
\begin{center}
\begin{tabular}{c|c c || c|c c}
    \hline\hline
    Design 1 & Rank: 10 & Total Designs: 16 & Design 2 & Rank: 10 & Total Designs: 8\\\hline
    $x\mapsto$ &  $\phi_x(z) = z,$  & $\phi_x(w) = w$ & $x\mapsto$ &  $\phi_x(z) = zw,$  & $\phi_x(w) = w$ \\
    &  $\phi_x(z) = z^3,$  & $\phi_x(w) = z^4w$ & &  $\phi_x(z) = z^3w,$  & $\phi_x(w) = z^4w$\\
    &  $\phi_x(z) = z^5,$  & $\phi_x(w) = w$ & &  $\phi_x(z) = z^5w,$  & $\phi_x(w) = w$\\
    &  $\phi_x(z) = z^7,$  & $\phi_x(w) = z^4w$ & &  $\phi_x(z) = z^7w,$  & $\phi_x(w) = z^4w$\\
    \hline
    $y\mapsto$ & $\phi_y(z) = z,$  & $\phi_y(w) = w$ & $y\mapsto$ & $\phi_y(z) = z,$  & $\phi_y(w) = w$ \\
    &  $\phi_y(z) = z^3,$  & $\phi_y(w) = z^4w$&&\\
    &  $\phi_y(z) = z^5,$  & $\phi_y(w) = w$ && $\phi_y(z) = z^5,$  & $\phi_y(w) = w$\\
    &  $\phi_y(z) = z^7,$  & $\phi_y(w) = z^4w$&&\\\hline\hline

    Design 3 & Rank: 11 & Total Designs: 24 & Design 4 & Rank: 12 & Total Designs: 40\\\hline
    $x\mapsto$ & $\phi_x(z)= z$ & $\phi_x(w)= z^4w$ & $x\mapsto$ & $\phi_x(z)= z$ & $\phi_x(w)= w$ \\
    & $\phi_x(z)= z^3$ & $\phi_x(w)= w$ & & $\phi_x(z)= z$ & $\phi_x(w)= z^4w$ \\
    & $\phi_x(z)= z^5$ & $\phi_x(w)= z^4w$ & & $\phi_x(z)= z^3$ & $\phi_x(w)= w$ \\
    & $\phi_x(z)= z^7$ & $\phi_x(w)= w$ & & $\phi_x(z)= z^3$ & $\phi_x(w)= z^4w$ \\
    & $\phi_x(z)= zw$ & $\phi_x(w)= z^4w$ & & $\phi_x(z)= z^5$ & $\phi_x(w)= w$\\
    & $\phi_x(z)= z^3w$ & $\phi_x(w)= w$ & & $\phi_x(z)= z^5$ & $\phi_x(w)= z^4w$\\
    & $\phi_x(z)= z^5w$ & $\phi_x(w)= z^4w$ & & $\phi_x(z)= z^7$ & $\phi_x(w)= w$\\
    & $\phi_x(z)= z^7w$ & $\phi_x(w)= z$ & & $\phi_x(z)= z^7$ & $\phi_x(w)= z^4w$\\
    &&&& $\phi_x(z)= z^3w$ & $\phi_x(w)= w$\\
    &&&& $\phi_x(z)= z^7w$ & $\phi_x(w)= w$\\
    \hline

    $y\mapsto$ & $\phi_y(z)= z$ & $\phi_y(w)= w$ & $y\mapsto$ & $\phi_y(z)= z$ & $\phi_y(w)= z^4w$\\
    & $\phi_y(z)= z^3$ & $\phi_y(w)= z^4w$ & & $\phi_y(z)= z^3$ & $\phi_y(w)= w$\\
    & $\phi_y(z)= z^5$ & $\phi_y(w)= w$ & & $\phi_y(z)= z^5$ & $\phi_y(w)= z^4w$\\
    & $\phi_y(z)= z^7$ & $\phi_y(w)= z^4w$ & & $\phi_y(z)= z^7$ & $\phi_y(w)= w$\\
    &&&& $\phi_y(z)= z^3w$ & $\phi_y(w)= w$\\
    &&&& $\phi_y(z)= z^7w$ & $\phi_y(w)= w$\\
    \hline\hline
\end{tabular}

\begin{tabular}{c|c c || c|c c}
    Design 5 &Rank: 12 & Total Designs: 24\\\hline
    $x\mapsto$ & $\phi_x(z)= z^3$ & $\phi_x(w)= w$ & $y\mapsto$ & $\phi_y(z)= z^3$ & $\phi_y(w)= w$ \\
    & $\phi_x(z)= z^7$ & $\phi_x(w)= w$ & & $\phi_y(z)= z^7$ & $\phi_y(w)= w$ \\
    & $\phi_x(z)= zw$ & $\phi_x(w)= w$ & & $\phi_y(z)= z^3w$ & $\phi_y(w)= w$ \\
    & $\phi_x(z)= z^3w$ & $\phi_x(w)= z^4w$ & & $\phi_y(z)= z^7w$ & $\phi_y(w)= w$ \\
    & $\phi_x(z)= z^5w$ & $\phi_x(w)= z^4w$ & & \\
    & $\phi_x(z)= z^7w$ & $\phi_x(w)= w$ & & \\
    \hline\hline


    Design 6 &Rank: 11&Total Designs: 8& Design 7 & Rank: 11&Total Designs: 8\\\hline
    $x\mapsto$ &  $\phi_x(z) = z,$  & $\phi_x(w) = w$ & $x\mapsto$ &  $\phi_x(z) = z^3,$  & $\phi_x(w) = w$\\
    &  $\phi_x(z) = z^5,$  & $\phi_x(w) = w$ & &  $\phi_x(z) = z^7,$  & $\phi_x(w) = w$\\
    &  $\phi_x(z) = zw,$  & $\phi_x(w) = w$ & &  $\phi_x(z) = z^3w,$  & $\phi_x(w) = w$\\
    &  $\phi_x(z) = z^5w,$  & $\phi_x(w) = w$ & &  $\phi_x(z) = z^7w,$  & $\phi_x(w) = w$\\
    \hline
    $y\mapsto$ & $\phi_y(z) = zw,$  & $\phi_y(w) = w$ & $y\mapsto$ & $\phi_y(z) = zw,$  & $\phi_y(w) = w$ \\
    &  $\phi_y(z) = z^5w,$  & $\phi_y(w) = w$ & &  $\phi_y(z) = z^5w,$  & $\phi_y(w) = w$\\\hline
\end{tabular}
\end{center}
When discussing the maps of $x$ and $y$, for brevity's and notation's sake we refer only to $\phi_x(z)$ and $\phi_y(z)$. In Design 3, any of the maps of the form $\phi_x(z)=z^i$ may be freely combined with any map of $y$, but the maps of the form $\phi_x(z)=z^iw$ can only be combined with $\phi_y(z)=z$ and $\phi_y(z)=z^5$.

Let us narrow the purview of the discussion to semi-direct products of difference sets whose developments are isomorphic to the symplectic design (henceforth referred to as ``symplectic difference sets"). Toward this, we first note that for groups $G_1$ and $G_2$ with SDP difference sets $D_1$ and $D_2$, respectively, the product construction $D$ of $D_1$ and $D_2$ in $G_1 \rtimes G_2$ is the same (set-equivalent) regardless of choice of homomorphism from $G_2$ into $Aut(G_1)$, as the homomorphism only affects the multiplication of \textit{elements}, and no such operation is occurring in the product construction. Thus, since the difference sets are element-wise equal, the question must come down to the blocks of the design, since this is the only part of the design that relies upon multiplication within each group.

Recall that the incidence matrix for the symplectic design on $2^{2n}$ points is \[A = \frac{-1}{2}\bigg(\big(\underbrace{(J_4-2I_4) \otimes (J_4-2I_4) \otimes \cdots \otimes (J_4-2I_4)}_{n \text{ times}}\big)-J_4\bigg),\] where $\otimes$ denotes the Kronecker product, $I_4$ is the $4 \times 4$ identity matrix, and $J_4$ is the $4 \times 4$ all-ones matrix. Let $D_1$ and $D_2$ be symplectic difference sets in $G_1$ and $G_2$, respectively, and let $\phi : G_2 \to Aut(G_1)$ be a homomorphism. Note that if $ker(\phi)=G_2$, then $G_1 \rtimes_\phi G_2 = G_1 \times G_2$, and $D = (D_1 \times (G_2-D_2)) \cup ((G_1-D_1) \times D_2)$ has the SDP by Theorem \ref{theorem:direct-product}, and in fact is the symplectic design, since the construction in Theorem \ref{theorem:direct-product} is homologous to the Kronecker product construction of the incidence matrix.

\begin{theorem}\label{theorem:semi-direct condition}
Let $D_1$ and $D_2$ be symplectic SDP difference sets in $G_1$ and $G_2$, respectively. Let $\phi:G_2 \to Aut(G_1)$ be a homomorphism, and let $G = G_1 \rtimes_\phi G_2$ be the semi-direct product of $G_1$ by $G_2$ under $\phi$. Then $D = \big(D_1 \times (G_2-D_2)\big) \cup \big((G_1-D_1) \times D_2\big)$ is a symplectic difference set if $(g_i,g_j)*D = (g_i,g_j) \cdot D$ for all $(g_i,g_j) \in G_1 \times G_2$, where $*$ is the operation defined by $\phi$, and $\cdot$ is the operation of the direct product.
\end{theorem}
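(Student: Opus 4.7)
The plan is to reduce the semi-direct product case to Theorem \ref{theorem:direct-product} by exploiting the hypothesis. First I would observe, as noted in the paragraph preceding the statement, that the set $D$ is literally the same subset of the underlying Cartesian set $G_1 \times G_2$ regardless of whether that set is equipped with the direct product operation or the semi-direct product operation $*$. The development of $D$, however, is sensitive to the group operation because its blocks are the left translates $(g_i,g_j)*D$ (respectively $(g_i,g_j)\cdot D$), and so in principle the two developments could differ even though their underlying difference sets are set-equal.

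Next I would unpack the hypothesis. The condition $(g_i,g_j)*D = (g_i,g_j)\cdot D$ for every $(g_i,g_j)$ is precisely the statement that each left translate of $D$ in the semi-direct product coincides, as a subset of $G_1 \times G_2$, with the corresponding left translate in the direct product. Consequently the family of blocks in the development of $D$ in $G_1 \rtimes_\phi G_2$ matches, block for block, the family of blocks in the development of $D$ in $G_1 \times G_2$. Since the point sets coincide as well (both are $G_1 \times G_2$), the two developments are literally the same symmetric design.

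Finally I would invoke Theorem \ref{theorem:direct-product} together with the discussion preceding the statement, which identifies the direct product construction in the group ring with the Kronecker product of incidence matrices and hence, in the symplectic case, with the defining Kronecker iteration for the symplectic design on $2^{2n}$ points. This shows that the development of $D$ in $G_1 \times G_2$ is the symplectic design. Transferring this back to the semi-direct product via the block coincidence already established, $D$ must be a difference set in $G_1 \rtimes_\phi G_2$ (since being a difference set is equivalent to its translates forming a symmetric design) whose development is the symplectic design, as required.

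The main obstacle is essentially conceptual rather than computational: one has to recognize that the hypothesis is engineered to say exactly that the blocks of the two developments agree, so that all the difference set and SDP content is inherited directly from Theorem \ref{theorem:direct-product}. No new verification of the $\lambda$-condition or of the SDP in the semi-direct setting is required, and in particular no group ring computation involving $DD^{(-1)}$ in $\mathbb{Z}[G_1 \rtimes_\phi G_2]$ needs to be carried out.
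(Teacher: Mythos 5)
Your proposal is correct and follows essentially the same route as the paper: both arguments observe that the hypothesis forces every block $(g_i,g_j)*D$ to coincide with $(g_i,g_j)\cdot D$, so the development in $G_1\rtimes_\phi G_2$ is literally the same design as the development in $G_1\times G_2$, which is symplectic by the direct product construction. Your added remark that being a difference set transfers because the translates form a symmetric design is a point the paper leaves implicit, but it does not change the argument.
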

\begin{proof}
Let $\phi$ be such that $(g_i,g_j)*D = (g_i,g_j) \cdot D$ for all $(g_i,g_j) \in G_1 \times G_2$. Since the underlying sets of $G_1 \rtimes_\phi G_2$ and $G_1 \times G_2$ are equal, form the incidence matrix $A$ for the development of $D$ in $G_1 \rtimes_\phi G_2$ and the incidence matrix $A'$ for the development $D$ in $G_1 \times G_2$ with the same ordering of the underlying set. Consider the incidence of some arbitrary point $y$ on some arbitrary block $rD$ in both designs. Then since $r,y \in G_1 \times G_2$, we have that $r = (g_i,g_j)$ and $y = (g_k, g_l)$ for some $(g_i,g_j), (g_k,g_l) \in G_1 \times G_2$. Thus since $r*D = r\cdot D$ by supposition, the incidence of $y$ in $r*D$ is the same as the incidence of $y$ in $r \cdot D$. Thus, since $r,y$ were arbitrary, $A$ must equal $A'$, and so the designs are isomorphic. Also, $D$ is a symplectic difference set in $G_1 \times G_2$ by the definition of the symplectic design, so $D$ is also a symplectic difference set in $D_1 \rtimes_\phi D_2$.
\end{proof}
This condition by itself is somewhat nebulous: for what (nontrivial) choices of $\phi$ is this the case? Firstly, note that we need only work with $(1,g_j)*D$, since $G_1$ applies none of the ``twisting" from the semi-direct product. Further, $\phi_{g_j}$ must induce a permutation on $D_1$ for all $g_j$. This leads us to the following theorem:

\begin{theorem}\label{theorem:semi-direct condition}
Let $G_1$ and $G_2$ be groups of even power of 2 order with $G_2$ having generators $\{x_1,x_2,\cdots,x_n\}$, and let $D_1$ and $D_2$ be symplectic difference sets in $G_1$ and $G_2$, respectively. Let $\phi:G_2 \to Aut(G_1)$ be a homomorphism. Then $D = \big(D_1 \times (G_2-D_2)\big) \cup \big((G_1-D_1) \times D_2\big)$ is a symplectic difference set in $G_1 \rtimes_\phi G_2$ if $\phi_{x_i}(D_1)=D_1$ for all $x_i$.
\end{theorem}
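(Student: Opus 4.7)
The plan is to invoke the preceding theorem, which guarantees that $D$ is a symplectic difference set in $G_1 \rtimes_\phi G_2$ as soon as $(g_i, g_j) * D = (g_i, g_j) \cdot D$ for every $(g_i, g_j) \in G_1 \times G_2$. I would therefore spend the proof upgrading the generator hypothesis $\phi_{x_i}(D_1) = D_1$ to this universal statement and then performing the direct set-theoretic check.

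First I would pass from the generators of $G_2$ to all of $G_2$ via a stabilizer-subgroup argument. Let $H = \{g \in G_2 : \phi_g(D_1) = D_1\}$. Because $\phi$ is a homomorphism, $\phi_{gh} = \phi_g \circ \phi_h$, so $g, h \in H$ yields $\phi_{gh}(D_1) = \phi_g(\phi_h(D_1)) = \phi_g(D_1) = D_1$; and because each $\phi_g$ is a bijection of $G_1$, the equality $\phi_g(D_1) = D_1$ forces $D_1 = \phi_{g^{-1}}(D_1)$. Hence $H$ is a subgroup of $G_2$, and since it contains the generating set $\{x_1, \dots, x_n\}$ by hypothesis, $H = G_2$, so $\phi_g(D_1) = D_1$ for every $g \in G_2$.

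Next I would carry out the direct computation. Using the semidirect product rule $(g_i, g_j) * (d_1, d_2) = (g_i \phi_{g_j}(d_1),\, g_j d_2)$ and splitting $D$ into its two pieces,
\[
(g_i, g_j) * D = \bigl( g_i \phi_{g_j}(D_1) \times g_j(G_2 - D_2) \bigr) \cup \bigl( g_i \phi_{g_j}(G_1 - D_1) \times g_j D_2 \bigr).
\]
Since $\phi_{g_j}$ is a bijection of $G_1$ fixing $D_1$, it also fixes $G_1 - D_1$ setwise. Substituting $\phi_{g_j}(D_1) = D_1$ and $\phi_{g_j}(G_1 - D_1) = G_1 - D_1$ yields exactly $(g_i, g_j) \cdot D$ as computed under the direct product operation. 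The hypothesis of the preceding theorem is therefore satisfied for all $(g_i, g_j)$, and applying it completes the proof.

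There is no serious obstacle here; the only step requiring care is the stabilizer-subgroup argument, where one must observe that $\phi_g(D_1) = D_1$ is preserved under both composition and inversion of automorphisms. Once $H = G_2$ is established, the remainder is routine set-theoretic bookkeeping in the product construction.
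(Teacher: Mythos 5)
Your proposal is correct and follows essentially the same route as the paper: reduce the hypothesis on generators to the statement $\phi_g(D_1)=D_1$ for all $g\in G_2$, and then conclude via the preceding theorem. In fact your write-up is more complete than the paper's own proof, which only performs the generator-to-group step (by writing $g$ as a word in the $x_i$, a slightly less clean version of your stabilizer-subgroup argument) and leaves the translate computation $(g_i,g_j)*D=(g_i,g_j)\cdot D$ implicit.
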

\begin{proof}
Suppose that $\phi_{x_i}(D_1)=D_1$ for all $x_i$. Let $g = x_1^{e_1}x_2^{e_2}\cdots x_n^{e_n} \in G_2$ be arbitrary. Then since $\phi$ is a homomorphism, $\phi_g(D_1) = \phi_{x_1^{e_1}}\phi_{x_2^{e_2}}\cdots\phi_{x_n^{e_n}}(D_1) = D_1$ by supposition.
\end{proof}
Thus, we can determine whether the condition in Theorem \ref{theorem:semi-direct condition} is met using only the generators of $G_2$. This significantly decreases the search space for such homomorphisms, as if there are $m_1$ mappings of $x_1$, $m_2$ mappings of $x_2$, and so on up to $m_n$ mappings of $x_n$ that keep $D$ fixed, there are then $\prod_{i=1}^n m_i$ homomorphisms that satisfy the property in Theorem \ref{theorem:semi-direct condition}. We present the following as a non-trivial example of this property:

\begin{example}
Let $G = (C_8 \times C_2) \rtimes_\phi C_4$, ($x^8=y^2=z^4=1$), where $\phi$ is defined by $\phi_z(x) = x^5$, $\phi_z(y)=y$. Recall that $C_8 \times C_2$ has a symplectic SDP difference set $D = \{1,x,x^2,x^5,y,x^6y\}$, and $C_4$ has the trivial symplectic SDP difference set $\{1\}$. Note that $\phi_z(D) = \{1, x^5, (x^5)^2, (x^5)^5, y, (x^5)^6y\} = \{1, x^5, x^2, x, y, x^6y\}$, and thus since the automorphism induced by the generator of $C_4$ fixes $D$, we must have that the incidence matrix of $(D \times \{z,z^2,z^3\}) \cup ((C_8 \times C_2 - D) \times \{1\})$ under this semi-direct product is equal to the incidence matrix under the direct product, and so the design is symplectic.
\end{example}

\section{Isomorphic Designs under Product Construction}\label{section:isomorphisms}

The closure of the symmetric difference property under direct product construction also opens a new line of questioning: how does product construction interact with isomorphic designs? In particular, given two SDP difference sets $D,D'$ produced from the product construction of isomorphic designs, how do different semi-direct products effect the equivalence of $D$ and $D'$? If the developments of two difference sets $D_1$ and $D_2$ are isomorphic, by an abuse of notation we denote this relation by $D_1 \simeq D_2$. For the direct product, we present the following theorem:

\begin{theorem}\label{theorem:direct-iso}
Let $G = G_1 \times G_2$ and $G' = G_1' \times G_2'$ be two distinct groups of order $2^{2n}$ with respective SDP difference sets $D = \big(D_1 \times (G_2-D_2)\big) \cup \big((G_1-D_1) \times D_2\big)$ and $D' = \big(D_1' \times (G_2'-D_2')\big) \cup \big((G_1'-D_1') \times D_2'\big)$ for SDP difference sets $D_i \in G_i$ and $D_i' \in G_i'$. Then if $D_1 \simeq D_1'$ and $D_2 \simeq D_2'$, then $D \simeq D'$.
\end{theorem}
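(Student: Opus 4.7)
The plan is to exploit the Kronecker/tensor structure of the incidence matrix that was isolated in the proof of Theorem~\ref{theorem:direct-product}. Write $A_1, A_2$ for the incidence matrices of the developments of $D_1, D_2$, and $B_1, B_2$ for those of $D_1', D_2'$. Ordering $G = G_1 \times G_2$ and $G' = G_1' \times G_2'$ compatibly with the product structure (as in Theorem~\ref{theorem:direct-product}), the incidence matrix $M$ of the development of $D$ admits the closed form
$$M \;=\; J_{|G_2|} \otimes A_1 \;+\; A_2 \otimes J_{|G_1|} \;-\; 2\,(A_2 \otimes A_1),$$
since the $(i,j)$-block of $M$ equals $A_1$ when the $(i,j)$-entry of $A_2$ is $0$ and equals $A_1^c = J - A_1$ when it is $1$. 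The identical expression for the incidence matrix $M'$ of the development of $D'$ holds in terms of $B_1, B_2, J_{|G_1'|}, J_{|G_2'|}$, and these have the same dimensions as those appearing in $M$ because $|G_i| = |G_i'|$ is forced by $D_i \simeq D_i'$.

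From the hypotheses $D_1 \simeq D_1'$ and $D_2 \simeq D_2'$, fix permutation matrices $P_1, Q_1, P_2, Q_2$ with $A_i = P_i B_i Q_i$ for $i = 1, 2$. The key observation is that for any permutation matrices $P, Q$ of matching sizes one has $PJQ = J$, since permuting rows or columns of the all-ones matrix leaves it unchanged. Combined with the mixed-product identity $(XY) \otimes (ZW) = (X \otimes Z)(Y \otimes W)$, this allows each Kronecker summand in the formula for $M$ to be rewritten by factoring $P_2 \otimes P_1$ out on the left and $Q_2 \otimes Q_1$ on the right. For example,
$$J \otimes A_1 \;=\; (P_2 J Q_2) \otimes (P_1 B_1 Q_1) \;=\; (P_2 \otimes P_1)(J \otimes B_1)(Q_2 \otimes Q_1),$$
and the same manipulation applied to $A_2 \otimes J$ and to $A_2 \otimes A_1$ yields, upon summing,
$$M \;=\; (P_2 \otimes P_1)\, M'\, (Q_2 \otimes Q_1).$$
Since $P_2 \otimes P_1$ and $Q_2 \otimes Q_1$ are themselves permutation matrices, this exhibits the required isomorphism of developments.

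The conceptual heart of the argument is the observation that the all-ones matrix $J$ absorbs permutations on both sides, which lets the outer tensor factors be pulled through the Kronecker terms that contain $J$ in place of $B_i$. The main obstacle is really notational bookkeeping: keeping straight which tensor slot corresponds to $G_1$ and which to $G_2$ under the block ordering inherited from Theorem~\ref{theorem:direct-product}, and recording the formula for $M$ in the form that makes the factorization transparent. Once those details are fixed, the rest is a handful of applications of the mixed-product rule for Kronecker products.
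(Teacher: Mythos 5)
Your argument is correct and is essentially the paper's own proof: the paper builds the same permutation matrices as products of a block-diagonal matrix ($I\otimes P_1$) and a block-permutation matrix ($P_2\otimes I$), which is exactly your $P_2\otimes P_1$ (and likewise on the right). Your explicit closed form $M = J\otimes A_1 + A_2\otimes J - 2(A_2\otimes A_1)$ together with the mixed-product rule is just a cleaner way of recording the same block-matrix manipulation.
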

\begin{proof}
Suppose $D_1 \simeq D_1'$ and $D_2 \simeq D_2'$. Then $|G_1| = |G_1'|=v_1$ and $|G_2|=|G_2'|=v_2$. Let $A_1, A_1', A_2, A_2'$ be the incidence matrices for the developments of $D_1,D_1',D_2,D_2'$, respectively, and maintain the ordering of $G_1, G_1', G_2, G_2'$ as in these incidence matrices for the remainder of the proof. Then using the same ordering of $G$ as in Theorem \ref{theorem:direct-product}, we have $A$ and $A'$ being block matrices made of copies of $A_1,A_1^c$ and $A_1',A_1^{\prime c}$, respectively. Since the designs of $D_1$ and $D_1'$ are isomorphic, there exist permutation matrices $P_1,Q_1$ such that $A_1' = P_1 A_1 Q_1$, and likewise there exist permutation matrices $P_2, Q_2$ for $A_2$ into $A_2'$. Then consider the diagonal block matrices $P_1',Q_1'$ with $P_1$ on the diagonal of one and $Q_1$ on the diagonal of the other. Then $P_1'AQ_1'$ permutes each $A_1$ or $A_1^c$ in $A$ into $A_1'$ or $A_1^{\prime c}$, respectively. Now, recalling that $A$ is $A_2$ as a block matrix, let $P_2'$ and $Q_2'$ be the block matrix equivalents (zero $v_1 \times v_1$ matrix in place of 0, and $I_{v_1}$ in place of 1) of $P_2$ and $Q_2$, respectively. Then $(P_2'P_1')A(Q_1'Q_2') = A'$, and we thus have that $D \simeq D'$.
\end{proof}

From this theorem, we have the following corollaries:

\begin{cor}
The developments of all direct product-constructed SDP difference sets of order 64 are isomorphic to the symplectic design on 64 points.
\end{cor}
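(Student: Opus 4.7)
The plan is to reduce any such $D$ to the canonical iterated-Kronecker-product form via Theorem~\ref{theorem:direct-iso}. Suppose $D$ is a direct product-constructed SDP difference set of order $64 = 2^6$ in a group $G = G_1 \times G_2$ with $|G_i| = 2^{v_i}$. By Theorem~\ref{theorem:direct-product}, both $v_1, v_2$ must be even with $v_1 + v_2 = 6$, forcing $\{v_1, v_2\} = \{2, 4\}$, and each factor $D_i$ is itself an SDP difference set in its group.

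The first step is to pin down the two factor types up to isomorphism of their developments. For the order-$4$ factor, the only admissible parameters are $(4,1,0)$ and its complement $(4,3,2)$, so the development is equal to the symplectic design on $4$ points (the base case $n=1$ of the Kronecker-product formula defining the symplectic design). For the order-$16$ factor, the claim is that any SDP $(16,6,2)$ difference set has development isomorphic to the unique symplectic design on $16$ points. I would invoke Kantor's classification \cite{Kantor} to obtain this directly; alternatively, if the order-$16$ factor itself arises by product construction, the only allowable split is $(v_1', v_2') = (2, 2)$, so both sub-factors are trivial SDPs in groups of order $4$, and Theorem~\ref{theorem:direct-iso} applied once shows the development is isomorphic to the product-constructed symplectic design on $16$ points.

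With both factors so identified, I would finish by choosing $D'$ to be the product construction formed from a trivial SDP in $C_2^2$ and the symplectic design on $16$ points (the latter itself written as a product of two trivial SDPs in $C_2^2$). Since $D_1 \simeq D_1'$ and $D_2 \simeq D_2'$ by the preceding step, Theorem~\ref{theorem:direct-iso} gives $D \simeq D'$. But $D'$ is by construction the three-fold iterated Kronecker product corresponding to $n=3$ in the definition of the symplectic design on $2^{2n}$ points, so its development is the symplectic design on $64$ points, and the corollary follows.

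The main obstacle is the order-$16$ step: without either Kantor's classification or the structural observation that every SDP $(16,6,2)$ difference set must also be product-constructed (hence reducible to the $(2,2)$ case), one cannot rule out a hypothetical ``primitive'' non-symplectic SDP factor of order $16$. Once that input is accepted, the remainder of the argument is a clean invocation of Theorems~\ref{theorem:direct-product} and~\ref{theorem:direct-iso}.
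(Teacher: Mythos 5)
Your proposal takes essentially the same route as the paper: reduce to the forced factorization $\{v_1,v_2\}=\{2,4\}$, identify each factor's development with the unique SDP design on $4$ and $16$ points respectively, and apply Theorem~\ref{theorem:direct-iso}. The paper simply asserts the uniqueness of the SDP design on $16$ points where you (rightly) flag it as the one external input requiring Kantor's classification; otherwise the arguments coincide.
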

\begin{proof}
We first note the SDP on $C_2^6$ by iterative product construction is isomorphic to the symplectic design on 64 points. Let $T$ be the trivial SDP difference set on $C_2^2$, and let $S$ be the product constructed SDP difference set on $C_2^4$. Note as well that there is only one SDP design each on 16 and 4 points. Let $G_1$ and $G_2$ be arbitrary groups of order 16 and 4 that contain SDP difference sets $D_1$ and $D_2$, respectively. Then since there is only one SDP design on 16 points and 4 points, $D_1 \simeq S$ and $D_2 \simeq T$. Thus, by Theorem \ref{theorem:direct-iso}, the product construction of $D_1$ and $D_2$ is isomorphic to the symplectic design.
\end{proof}

\begin{cor}
There are four nonisomorphic SDP designs on 256 points coming from product construction.
\end{cor}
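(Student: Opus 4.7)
The plan is to enumerate the admissible factorizations of $|G|=2^8$, apply Theorem~\ref{theorem:direct-iso} to pass isomorphism from factors to products, and appeal to the classification of SDP designs on the smaller orders. By Theorem~\ref{theorem:direct-product}, each factor in a product construction on $256$ points must be a group whose order is an even power of $2$, so the unordered factorizations of $2^8$ that are relevant are exactly $(4,64)$ and $(16,16)$; any finer factoring (such as $4 \times 4 \times 16$ or $4 \times 4 \times 4 \times 4$) collapses to one of these two by the associativity of the product construction, together with the uniqueness of the SDP design on $4$ and on $16$ points.

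Next, I would recall the classifications on the smaller orders: there is a unique SDP design on $4$ points (trivial), a unique SDP design on $16$ points (as used in the proof of the preceding corollary), and exactly four nonisomorphic SDP designs on $64$ points, one of which is the symplectic design (the unique product-constructed one, by the preceding corollary). For the $(4,64)$ shape, Theorem~\ref{theorem:direct-iso} furnishes at most four candidate products on $256$ points, one for each isomorphism class of SDP design on $64$ points. For the $(16,16)$ shape, the unique $16$-point SDP yields one product, and unfolding the product construction back down to $C_2^2$ identifies this product with the symplectic design on $256$ points; but the latter is also the $(4,64)$ product whose $64$-factor is symplectic, so the $(16,16)$ shape contributes no new isomorphism class.

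The main obstacle is establishing that the four $(4,64)$ candidates are pairwise nonisomorphic, since Theorem~\ref{theorem:direct-iso} supplies only the forward direction of the correspondence. I plan to handle this via a recovery argument based on the block structure worked out in the proof of Theorem~\ref{theorem:direct-product}. Taking $G_2 = C_2^2$ and $D_2 = \{1\}$, the $256 \times 256$ incidence matrix decomposes as a $4 \times 4$ block matrix of copies of $A_1$ and $A_1^c$ governed by the incidence pattern of the trivial SDP on $C_2^2$. In particular, the $|G_1| \times |G_1|$ sub-block indexed by $G_1 \times \{1\}$ is either $A_1$ or $A_1^c$, from which the $64$-point factor can be read off up to isomorphism. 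Any isomorphism between two such $256$-point products can then be arranged, by composing with permutations that respect the block structure, to restrict to an isomorphism of the underlying $64$-point factors, so distinct isomorphism classes of $64$-point factor produce distinct $256$-point products. Combining this with the enumeration above gives exactly four nonisomorphic SDP designs on $256$ points from product construction.
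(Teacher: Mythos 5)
Your core argument is the same as the paper's: there are four nonisomorphic SDP designs on 64 points and a unique one on 4 points, so Theorem~\ref{theorem:direct-iso} produces one product design on 256 points per isomorphism class on 64 points. The paper's proof is exactly that one sentence and stops there. You go further in two respects, both of which are genuine improvements. First, you account for the other factorization shape $(16,16)$ and argue it collapses into the symplectic class already covered by the $(4,64)$ shape; the paper silently ignores this. Second, and more importantly, you correctly observe that Theorem~\ref{theorem:direct-iso} only gives the implication ``factors isomorphic $\Rightarrow$ products isomorphic,'' which bounds the count \emph{above} by four but does not by itself show the four products are pairwise nonisomorphic. The paper's proof has exactly this gap and does not acknowledge it.

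That said, your proposed repair is still only a sketch at its crucial step. Reading off the sub-block indexed by $G_1 \times \{1\}$ recovers $A_1$ (or $A_1^c$) from the \emph{given} block presentation, but an abstract isomorphism of the 256-point designs is an arbitrary pair of permutation matrices $P, Q$ with $A = PA'Q$, and nothing forces it to respect the $4 \times 4$ block partition. The assertion that any such isomorphism ``can be arranged, by composing with permutations that respect the block structure, to restrict to an isomorphism of the underlying 64-point factors'' is precisely the content that needs proving, and it is not obvious: one would need to show the block partition is intrinsic to the design (e.g., recoverable from some isomorphism invariant) rather than an artifact of the chosen ordering. Note also that 2-rank will not separate the four candidates, since all SDP designs on $2^{2n}$ points share the minimal rank $2n+2$. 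So your write-up correctly locates the missing idea but does not yet supply it; to be fair, neither does the paper.
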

\begin{proof}
There are four nonisomorphic SDP designs on 64 points and one SDP design on 4 points, and so there is exactly one nonisomorphic design on 256 for each nonisomorphic design on 64 points by Theorem \ref{theorem:direct-iso}.
\end{proof}

\bibliographystyle{amsplain}

\end{document}